\newcommand{\set}[1]{\left\{ #1 \right\}}
\newcommand{\setcond}[2]{\left\{ #1 \;\middle\vert\; #2 \right\}}
\newcommand{\RR}{\mathbb{R}}
\newcommand{\ZZ}{\mathbb{Z}}
\newcommand{\TT}{\mathbb{T}}
\newtheorem{thm}{Theorem}[section]
\newtheorem{lem}[thm]{Lemma}
\newtheorem*{rmk}{Remark}
\theoremstyle{definition}
\DeclareMathOperator{\ex}{ex}
\title{Sums of dilates over groups of prime order}
\author{
David Conlon\thanks{Department of Mathematics, Caltech, Pasadena, CA 91125, USA. Email: {\tt dconlon@caltech.edu}. Research supported by NSF Awards DMS-2054452 and DMS-2348859.} \and Jeck Lim\thanks{Department of Mathematics, Caltech, Pasadena, CA 91125, USA. Email: {\tt jlim@caltech.edu}. Research partially supported by an NUS Overseas Graduate Scholarship.}}
\date{}
\begin{document}
\maketitle

\begin{abstract}
For $p$ prime, $A \subseteq \mathbb{Z}/p\mathbb{Z}$ and $\lambda \in \mathbb{Z}$, the sum of dilates $A + \lambda \cdot A$ is defined by
\[A + \lambda \cdot A = \{a + \lambda a' : a, a' \in A\}.\]
The basic problem on such sums of dilates asks for the minimum size of $|A + \lambda \cdot A|$ for given $\lambda$, $A$ of given density $\alpha$, and $p$ tending to infinity. We investigate this problem for $\alpha$ fixed and $\lambda$ tending to infinity, proving near-optimal bounds in this case.
\end{abstract}

\section{Introduction}

Given subsets $A$ and $B$ of an abelian group $G$, their sumset $A+B$ is given by
\[A+B = \{a + b: a\in A, b \in B\}.\]
The difference set $A - B$ is defined similarly with subtraction replacing addition.

If $G = \mathbb{Z}$, then it is a simple exercise to show that $|A+B| \ge |A| + |B| - 1$. Indeed, if we order the elements of $A$ as $a_1 < a_2 < \dots < a_s$ and $B$ as $b_1 < b_2 < \dots < b_t$, then $A + B$ contains the elements
\[a_1 + b_1 < a_1 + b_2 < \dots < a_1 + b_t < a_2 + b_t < \dots < a_s + b_t,\]
so we have at least $|A| + |B| - 1$ distinct elements. If we work instead over $G = \ZZ/p\ZZ$ with $p$ prime, the corresponding inequality, known as the Cauchy--Davenport theorem~\cite{C13, D35}, says that
\[|A + B| \ge \min\{|A| + |B| - 1, p\},\]
since one must account for the possibility that the sumset contains all the elements of $\ZZ/p\ZZ$. Several proofs of this inequality are known (see, for example,~\cite{AMR95}), but, unlike the integer case, none of them is particularly simple.

Our concern in this paper will be with sumsets of a particular type. 
Given a subset $A$ of an abelian group $G$ and $\lambda \in \ZZ$, let
\[A + \lambda \cdot A = \{a + \lambda a' : a, a' \in A\}.\]
Such sums of dilates, as they are known, have attracted considerable attention in recent years, with the basic problem asking for an estimate on the minimum size of $|A + \lambda \cdot A|$ given $|A|$. Over the integers, this problem was essentially solved by Bukh~\cite{B08}, who showed that, for any finite set of integers $A$,
\[|A + \lambda \cdot A| \geq (|\lambda| + 1)|A| - o(|A|).\]
This result was later tightened by Balogh and Shakan~\cite{BS14}, improving the $o(|A|)$ term to a constant $C_\lambda$ depending only on $\lambda$, which is best possible up to the value of $C_\lambda$ (see also~\cite{CHS09, CSV10, DCS14, HR11, L13} for some earlier work on specific cases and~\cite{BS15, CL24, H21, KP23, S16} for extensions and variations). 

The analogous problem over $\ZZ/p\ZZ$ with $p$ prime was first studied in detail by Plagne~\cite{P11} and by Fiz Pontiveros~\cite{F13}. For instance, using a rectification argument, which allows one to treat small subsets of $\ZZ/p\ZZ$ as though they are sets of integers, the latter showed that for every $\lambda \in \ZZ$ there exists $\alpha > 0$ such that
\[|A + \lambda \cdot A| \geq (|\lambda| + 1)|A| - C_\lambda\]
for all $|A| \leq \alpha p$. On the other hand, he showed that for every $\lambda \in \ZZ$ and $\epsilon > 0$ there exists $\delta > 0$ such that, for every sufficiently large prime $p$, there is a set $A \subseteq \ZZ/p\ZZ$ with $|A| \geq (\frac12 - \epsilon)p$ such that $|A + \lambda \cdot A| \leq (1 - \delta)p$. That is, as $|A|$ approaches $p/2$, one cannot do much better than the Cauchy--Davenport theorem, which tells us that $|A + \lambda \cdot A| \geq 2|A|-1$.

For our purposes, it will be convenient to introduce some terminology. For $p$ prime, $\lambda\in\ZZ$ and $\alpha\in (0,1)$, we let 
$$\ex(\ZZ/p\ZZ,\lambda,\alpha)=\min\left\{|A+\lambda\cdot A|/p : A\subseteq\ZZ/p\ZZ, \, |A|\geq \alpha p\right\}$$ 
and then define $\ex(\lambda,\alpha)=\limsup_p \ex(\ZZ/p\ZZ,\lambda,\alpha)$. The problem of asymptotically estimating the minimum size of sums of dilates over $\ZZ/p\ZZ$ may then be rephrased as the problem of determining $\ex(\lambda,\alpha)$. This seems very difficult in full generality, though the results of Fiz Pontiveros described above imply that
\begin{itemize}
\item
 $\ex(\lambda,\alpha) = (|\lambda| + 1)\alpha$ for $\lambda$ fixed and $\alpha$ sufficiently small in terms of $\lambda$ and 
 \item
 $\ex(\lambda, \alpha) < 1$ for $\alpha < \frac12$.
\end{itemize}

Here we look at the case where $\alpha$ is fixed and $\lambda$ is allowed to grow. 
In rough terms, we wish to understand how small the sum of dilates $A + \lambda \cdot A$ can be if we fix the density $\alpha$ of $A$ and let $\lambda$ tend to infinity. 
More precisely, we set $\ex(\alpha)=\limsup_{\lambda\to\infty} \ex(\lambda, \alpha)$ and investigate the behavior of $\ex(\alpha)$. 

By Cauchy--Davenport, if $\alpha \ge \frac12$, then 
$\ex(\alpha)=1$. Moreover, if $\alpha \leq \frac12$, then, again by Cauchy--Davenport, $|A+\lambda\cdot A|\geq 2|A| - 1$, so $\ex(\alpha)\geq 2\alpha$. On the other hand, since $|A+\lambda\cdot A|\leq p$, we always have the trivial upper bound $\ex(\alpha)\leq 1$. Our main result improves these simple bounds significantly, giving a reasonably complete picture of the behavior of $\ex(\alpha)$. 

\begin{thm} \label{thm:main}
There exist constants $C, C', c > 0$ such that
\[e^{C' \log^c(1/\alpha)}\alpha \leq \ex(\alpha) \leq e^{C \sqrt{\log (1/\alpha)}}\alpha\] 
for all $\alpha \in (0, \frac12)$. Moreover, $\ex(\alpha)<1$ for all $\alpha \in (0, \frac12)$.  
\end{thm}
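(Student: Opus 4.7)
The plan is to prove the three assertions of Theorem~\ref{thm:main}---the upper bound, the lower bound, and the strict inequality $\ex(\alpha)<1$ for $\alpha\in(0,1/2)$---separately.

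\emph{Upper bound via Bohr sets.} For given $\alpha$, set $d=\lceil\sqrt{\log(1/\alpha)/\log 2}\rceil$ and $\rho=\alpha^{1/d}/2$. For any integer $\lambda\ge 2$ and any sufficiently large prime $p$ (so that $1,\lambda,\dots,\lambda^{d-1}$ are distinct mod $p$), I would take
\[A = \{x\in\ZZ/p\ZZ : \|\lambda^i x/p\|\le\rho\text{ for all }i=0,1,\dots,d-1\},\]
the Bohr set with frequency set $\Gamma=\{1,\lambda,\dots,\lambda^{d-1}\}$, where $\|y\|$ denotes the distance from $y$ to the nearest integer. Standard Bohr set volume estimates give $|A|\gtrsim(2\rho)^d p\ge\alpha p$. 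Since $\lambda\cdot A$ is the Bohr set with frequency set $\lambda^{-1}\Gamma=\{\lambda^{-1},1,\lambda,\dots,\lambda^{d-2}\}$, the triangle inequality yields that $A+\lambda\cdot A$ is contained in the Bohr set with frequency set $\Gamma\cap\lambda^{-1}\Gamma=\{1,\lambda,\dots,\lambda^{d-2}\}$ (of size $d-1$) and radius $2\rho$, which has size at most $(4\rho)^{d-1}p=2^{d-1}\alpha^{(d-1)/d}p$. Our choice of $d$ optimizes this to $|A+\lambda\cdot A|/p\le e^{O(\sqrt{\log(1/\alpha)})}\alpha$, and taking $\limsup_p$ and $\limsup_\lambda$ gives the stated upper bound.

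\emph{Lower bound via Freiman--Ruzsa.} Let $A\subseteq\ZZ/p\ZZ$ with $|A|=\alpha p$ and set $K=|A+\lambda\cdot A|/|A|$. Ruzsa's triangle inequality gives $|A+A|\le K^2|A|$, so $A$ has doubling constant at most $K^2$. A quantitative Polynomial Freiman--Ruzsa theorem (Sanders's polylogarithmic bound, or the recent polynomial bound of Gowers--Green--Manners--Tao) then implies that $A$ is contained in a proper generalized arithmetic progression of dimension at most $(\log K)^{O(1)}$ and size at most $|A|\cdot e^{(\log K)^{O(1)}}$. Since $\ZZ/p\ZZ$ has no nontrivial subgroups, the GAP has size at most $p$, so $\alpha\cdot e^{(\log K)^{O(1)}}\le 1$, i.e., $\log K\ge c'\log^c(1/\alpha)$ for some $c>0$.

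\emph{Moreover part and main obstacles.} For $\alpha<1/4$, specializing the Bohr set construction above to $d=2$ yields $|A+\lambda\cdot A|/p\le 2\sqrt{\alpha}<1$, so the strict inequality follows directly. For $\alpha\in[1/4,1/2)$, I would adapt Fiz Pontiveros's rectification-based construction using unions of short arithmetic progressions, carefully ensuring that the resulting $\delta$ in $|A+\lambda\cdot A|\le(1-\delta)p$ can be taken uniformly in $\lambda$. The main obstacles I anticipate are (i) obtaining the best possible exponent $c$ in the lower bound through a tight application of quantitative PFR, and (ii) establishing the uniformity in $\lambda$ required for the moreover when $\alpha$ is close to $1/2$, where the direct Bohr set estimate is no longer useful.
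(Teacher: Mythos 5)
Your upper bound construction is essentially the paper's in different clothing: the Bohr set with frequencies $1,\lambda,\dots,\lambda^{d-1}$ is, for $\lambda$ large, the pullback of a box in $\TT^d$ under $x\mapsto (x,\lambda x,\dots,\lambda^{d-1}x)$, which is exactly the set the paper builds via base-$\lambda$ digit expansions of the box $(0,\gamma^{1/d})^d$. The one thing you assert without justification is the \emph{upper} bound $(4\rho)^{d-1}p$ on the size of the containing Bohr set; there is no such bound for general frequency sets, and you need the near-independence of $1,\lambda,\dots,\lambda^{d-2}$ as $\lambda\to\infty$ (an equidistribution computation, which is precisely what the paper's discretization argument carries out). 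That is repairable. Your treatment of the ``moreover'' part is not: for $\alpha\in[\tfrac14,\tfrac12)$ you defer to adapting Fiz Pontiveros's rectification construction ``uniformly in $\lambda$,'' but his $\delta$ degrades with $\lambda$ and uniformity is the entire difficulty; the paper instead uses the simplex $B=\{x\in\TT^n: x_i>0,\ \sum x_i<\tfrac n2-1\}$, whose measure tends to $\tfrac12$ while $\pi_1(B)+\pi_n(B)$ misses a set of positive measure. You would need an idea of this kind.

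The lower bound argument has a fatal gap. After deducing $|A+A|\le K^2|A|$ you apply a Freiman-type theorem to place $A$ in a proper GAP $P$ of size at most $|A|e^{(\log K)^{O(1)}}$, and then conclude $\alpha\, e^{(\log K)^{O(1)}}\ge 1$ from ``the GAP has size at most $p$.'' Nothing forces $|P|\ge p$: a proper GAP of size, say, $2\alpha p$ sitting strictly inside $\ZZ/p\ZZ$ is perfectly consistent with everything you have derived. Indeed your argument never uses that $\lambda$ is large after the first line, so if it worked it would show that \emph{every} set of density $\alpha$ with bounded doubling forces $\log K\gtrsim \log^c(1/\alpha)$ --- false, as an interval has doubling $2$ at any density. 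The missing mechanism is the one the paper supplies: from the Bogolyubov--Ruzsa lemma one gets a proper GAP $P\subseteq 2A-2A$ of dimension $d\le C\log^6 K$ and size $\gg_K \alpha p$; a dilated Pl\"unnecke--Ruzsa iteration (Lemma~\ref{lem:pr}) bounds $|(2A-2A)+\lambda\cdot(2A-2A)+\cdots+\lambda^d\cdot(2A-2A)|\le K^{O(d)}|A|$; and, crucially, because the long sides of $P$ have length at least $\lambda$, the set $P'+\lambda\cdot P'+\cdots+\lambda^d\cdot P'$ contains $\lambda^d P'$, which by Cauchy--Davenport is all of $\ZZ/p\ZZ$ once $\lambda$ is large. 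Comparing $p\le K^{O(d)}\alpha p$ with $d\le C\log^6K$ is what yields $\log K\gtrsim \log^{1/7}(1/\alpha)$. Without some step in which a power of $\lambda$ blows the structured set up to the whole group, no lower bound of this type can follow. (Separately, the polynomial Freiman--Ruzsa theorem of Gowers--Green--Manners--Tao is a statement about bounded-torsion groups and is not available over $\ZZ/p\ZZ$.)
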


Unlike in the fixed $\lambda$ case, we cannot improve the trivial upper bound $\ex(\alpha)\leq 1$ by just taking $A$ to be an interval. Instead, what we do is show that $\ex(\alpha)$ is bounded above by a continuous variant defined over the torus $\TT=\RR/\ZZ$ and then provide an upper bound for that variant. We go straight into the details of this construction, before returning to the lower bound, which makes use of several classical tools from additive combinatorics, in Section~\ref{sec:lower}.

\section{The upper bound} \label{sec:upper} 

Let $\TT=\RR/\ZZ$, $n>1$ be an integer and $\mu$ be the Lebesgue measure on $\TT^k$ for any positive integer $k$. 
Let $\pi_1:\TT^n\to \TT^{n-1}$ be the projection map ignoring the first coordinate and $\pi_n:\TT^n\to \TT^{n-1}$ the projection map ignoring the last coordinate. Consider the following problem: given $0<\alpha<1$, what is the smallest possible value of $\mu(\pi_1(B)+\pi_n(B))$ over all open sets $B\subseteq \TT^n$ with $\mu(B) > \alpha$? 

Equivalently, we can ask for the smallest possible value of $\mu(B\times \TT+\TT\times B)$ over all open sets $B\subseteq \TT^n$ with $\mu(B) > \alpha$. In this form, written as a problem about sums of shifts rather than sums of projections, there is a ready analogy with the problem of estimating sums of transcendental dilates, which can also be phrased in terms of sums of shifts and ultimately has bounds of a similar form~\cite{CL23}. This analogy partly motivates the methods we use here for both the upper and lower bounds. 

To capture this question more succinctly, we define 
 \[\ex_T(n,\alpha)=\inf \left\{\mu(\pi_1(B)+\pi_n(B)) : B\subseteq \TT^n \text{ open}, \, \mu(B)>\alpha\right\}\] 
 and set $\ex_T(\alpha)=\lim_{n\to\infty} \ex_T(n,\alpha)$. This limit exists since $\ex_T(n,\alpha)$ is decreasing in $n$. Indeed, if $B\subseteq \TT^n$ with $\mu(\pi_1(B)+\pi_n(B))=\beta$, consider $B'=B\times \TT\subseteq\TT^{n+1}$. Then $\mu(B')=\mu(B)$ and $\mu(\pi_1(B')+\pi_{n+1}(B'))=\mu(\pi_1(B)\times \TT+B)=\mu(\pi_1(B)+\pi_n(B))=\beta$, so that $\ex_T(n+1,\alpha)\leq \ex_T(n,\alpha)$. 
 
 The main result of this section says that $\ex(\alpha) \le \ex_T(\alpha)$, thereby allowing us to give an upper bound on $\ex(\alpha)$ by instead bounding $\ex_T(\alpha)$. The idea of the proof is to construct an example in $\ZZ/p\ZZ$ from one in $\TT^n$ by approximating each point of $\TT^n$ by a number in $\ZZ/p\ZZ$ written in base $\lambda$, with each point $(x_1,\ldots,x_n)\in \TT^n$ roughly corresponding to $\lfloor (x_1+\frac{x_2}{\lambda}+\cdots +\frac{x_n}{\lambda^{n-1}})p\rfloor \in \ZZ/p\ZZ$. 

\begin{thm} \label{thm:semiequiv}
    $\ex(\alpha)\leq \ex_T(\alpha)$.
\end{thm}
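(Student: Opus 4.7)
The plan is to construct, for each sufficiently large integer $\lambda$ and prime $p$, a set $A \subseteq \ZZ/p\ZZ$ with $|A| \geq \alpha p$ whose dilated sumset has density close to $\ex_T(\alpha)$. Fix $\epsilon > 0$ and, using the definition of $\ex_T(\alpha)$, choose $n$ and an open $B \subseteq \TT^n$ with $\mu(B) > \alpha$ and $\mu(\pi_1(B) + \pi_n(B)) < \ex_T(\alpha) + \epsilon$; after an inner approximation we may further take $B$ to be a finite union of open boxes. Introduce the group homomorphism $f_\lambda : \ZZ/p\ZZ \to \TT^n$ defined by $f_\lambda(a) = \paren{a/p, \, \lambda a/p, \, \ldots, \, \lambda^{n-1} a/p} \pmod{\ZZ^n}$ and set $A = f_\lambda^{-1}(B)$. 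This formalizes the correspondence indicated in the excerpt: if $a/p = \sum_{i \geq 1} d_i/\lambda^i$ is the base-$\lambda$ expansion, then the $i$-th coordinate of $f_\lambda(a)$ is the shifted tail $\sum_{j \geq 1} d_{i+j-1}/\lambda^j$.

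Both the density bound and the sumset bound reduce to Weyl equidistribution for the orbit $\set{tv \pmod{\ZZ^n} : t \in \TT} \subseteq \TT^n$, where $v = (1, \lambda, \ldots, \lambda^{n-1})$. A standard Fourier computation gives $\int_0^1 \one_C(tv)\,dt \to \mu(C)$ as $\lambda \to \infty$ for any finite union of open boxes $C$: for $\xi \in \ZZ^n \setminus \set{0}$, the quantity $\xi \cdot v = \xi_1 + \xi_2\lambda + \cdots + \xi_n\lambda^{n-1}$ is a nonzero polynomial in $\lambda$, so the only contributing Fourier mode in a smoothing of $\one_C$ is $\xi = 0$ once $\lambda$ is large in terms of $C$. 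For such a fixed $\lambda$, the finite average $|f_\lambda^{-1}(C)|/p$ is a Riemann sum for this integral along the periodic orbit, giving $|f_\lambda^{-1}(C)|/p \to \int_0^1 \one_C(tv)\,dt$ as $p \to \infty$. Taking $C = B$ yields $|A|/p \to \mu(B) > \alpha$, so $|A| \geq \alpha p$ for all sufficiently large $\lambda$ and $p$.

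The key algebraic step is the sumset inclusion. Since $f_\lambda$ is a homomorphism, $f_\lambda(a+\lambda a') = f_\lambda(a) + f_\lambda(\lambda a')$; writing out coordinates, for $i=1,\ldots,n-1$ the $i$-th coordinate of $f_\lambda(a+\lambda a')$ equals $\lambda^{i-1}a/p + \lambda^i a'/p = f_\lambda(a)_i + f_\lambda(a')_{i+1}$, while the last coordinate involves $\lambda^n a'/p$ and is unconstrained in $\TT$. The first $n-1$ coordinates therefore equal $\pi_n(f_\lambda(a)) + \pi_1(f_\lambda(a'))$; if $a, a' \in A$, this vector lies in $\pi_n(B) + \pi_1(B)$, so
\[
A + \lambda A \subseteq f_\lambda^{-1}\bigl((\pi_1(B)+\pi_n(B)) \times \TT\bigr).
\]
Applying the same equidistribution to the open set on the right gives $|A+\lambda A|/p \to \mu(\pi_1(B)+\pi_n(B)) < \ex_T(\alpha) + \epsilon$ as $\lambda, p \to \infty$. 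Letting $\epsilon \downarrow 0$ yields $\ex(\alpha) \leq \ex_T(\alpha)$.

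The only real obstacle is the nested-limit bookkeeping: because $\ex(\alpha)$ is a $\limsup$ over $\lambda$ of a $\limsup$ over $p$, one must check that for each sufficiently large $\lambda$ the $p$-asymptotic holds uniformly. Fixing $B$ as a finite union of open boxes at the outset handles this—the Fourier truncation threshold depends only on $B$, so the $\lambda$-threshold is fixed, and once $\lambda$ exceeds it every sufficiently large prime $p$ works. Everything else, in particular the interpretation of multiplication by $\lambda$ on $\ZZ/p\ZZ$ as the coordinate shift on $f_\lambda$, is routine.
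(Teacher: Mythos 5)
Your proof is correct, and while it implements the same underlying base-$\lambda$ correspondence between $\TT^n$ and $\ZZ/p\ZZ$ that the paper sketches before the theorem, the verification is genuinely different. The paper builds $A$ explicitly: it discretizes $B$ into cubes of side $1/\lambda$, encodes each surviving cube as an interval of length $\lambda^{-n}$ via base-$\lambda$ digits, and then bounds $A+\lambda\cdot A$ by tracking the carries in base-$\lambda$ addition by hand; the carry set $\set{0,1}^{n-1}$ is exactly absorbed by the fact that the Minkowski sum of two side-$1/\lambda$ cubes is a side-$2/\lambda$ cube, so no density is lost. You instead pull $B$ back under the homomorphism $f_\lambda(a)=(\lambda^{i-1}a/p)_{i=1}^n$, for which the identity $\pi_n(f_\lambda(a+\lambda a'))=\pi_n(f_\lambda(a))+\pi_1(f_\lambda(a'))$ makes the containment $A+\lambda\cdot A\subseteq f_\lambda^{-1}\paren{(\pi_1(B)+\pi_n(B))\times\TT}$ exact, with no carries to control; the price is that both density computations now rest on equidistribution of the curve $t\mapsto t(1,\lambda,\dots,\lambda^{n-1})$ as $\lambda\to\infty$, and of its $p$ equally spaced sample points as $p\to\infty$, rather than on elementary counting. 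Your handling of the nested limits is the right one: reducing $B$ to a finite union of open boxes fixes a finite set of frequencies $\xi$ for which one must ensure $\xi_1+\xi_2\lambda+\dots+\xi_n\lambda^{n-1}\neq 0$, so the threshold on $\lambda$ depends only on $B$, and for each such $\lambda$ the Riemann-sum convergence in $p$ holds because the relevant sets meet the closed curve in finitely many arcs. The one spot to phrase more carefully is ``the only contributing Fourier mode in a smoothing'' of the indicator: one should sandwich the indicator between genuine trigonometric polynomials (e.g.\ Fej\'er-type majorants and minorants of slightly enlarged and shrunk boxes) so that the frequency set is actually finite; this is the standard Weyl argument and goes through since your sets are Jordan measurable. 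In exchange for invoking this analytic input, your route avoids all of the digit bookkeeping in the paper's proof and yields the same bound $\ex(\alpha)\leq\mu(\pi_1(B)+\pi_n(B))$ with no loss.
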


\begin{proof}
    Let $n>1$ and $B\subseteq\TT^n$ be an open set such that $\mu(B)=\alpha'>\alpha$ 
    and $\mu(\pi_1(B)+\pi_n(B))=\beta$. We will show that $\ex(\alpha)\leq \beta$.

    Let $\epsilon>0$ be arbitrary, $\lambda$ be a positive integer, $T=\ZZ/\lambda \ZZ$ and discretize $\TT^n$ into $T^n$. For $x=(x_1,\ldots,x_n)\in T^n$ (with integers $0\leq x_i<\lambda$ for each $i$), define $C_x\subseteq \TT^n$ to be the cubical box
    \[\prod_{i=1}^n \left[ \frac{x_i}{\lambda},\frac{x_i+1}{\lambda}\right).\]
    Let $S=\setcond{x\in T^n}{C_x\subseteq B}$ and $B'=\bigcup_{x\in S} C_x\subseteq B$. As $\lambda\to\infty$, $\mu(B')$ approaches $\mu(B)=\alpha'$ since $B$ is open. Therefore, for $\lambda$ sufficiently large in terms of $\epsilon$, we have $\mu(B')\geq \alpha'-\epsilon$. For $x\in T^n$, define $I_x$ to be the interval $[y,y+\lambda^{-n})$, where 
    $$y=\frac{x_1}{\lambda}+\frac{x_2}{\lambda^2}+\dots+\frac{x_n}{\lambda^n}.$$ 
    Set $A=\bigcup_{x\in S} I_x\subseteq \TT$. Then $\mu(A)=|S|/\lambda^n=\mu(B')\geq \alpha'-\epsilon$. We claim that 
    $$\mu(A+\lambda\cdot A)\leq \mu(\pi_1(B')+\pi_n(B')).$$ 
    
    To see how the theorem follows from this claim, we again discretize $\TT$ into $\ZZ/p\ZZ$. Set $A'\subseteq \ZZ/p\ZZ$ to be $A'=\{0\leq a<p : \left[\frac{a}{p}, \frac{a+1}{p}\right)\subseteq A\}$. By construction, $|A'|/p\leq \mu(A)$. Moreover, since $A$ is a finite union of half-closed intervals, $|A'|/p$ approaches $\mu(A)$ as $p\to\infty$. Therefore, for $p$ sufficiently large in terms of $\epsilon$, we have $|A'|\geq (\mu(A)-\epsilon)p$. For any $a+\lambda b\in A'+\lambda\cdot A'$ with $a,b\in A'$, we have $\left[\frac{a}{p},\frac{a+1}{p}\right)\subseteq A$ and $\frac{b}{p}\in A$. Thus, $\left[\frac{a+\lambda b}{p},\frac{a+\lambda b+1}{p}\right)\subseteq A+\lambda\cdot A$. Hence, $|A'+\lambda\cdot A'|/p\leq \mu(A+\lambda\cdot A)$. From the claim,
    $$\frac{|A'+\lambda \cdot A'|}{p}\leq \mu(A+\lambda\cdot A)\leq \mu(\pi_1(B')+\pi_n(B'))\leq \beta.$$
    Since $|A'|\geq (\mu(A)-\epsilon)p\geq (\alpha'-2\epsilon)p$, taking $\epsilon=\frac{\alpha'-\alpha}{2}$ gives $\ex(\alpha) \leq \beta$, as required.

    To prove the claim, let $S'=\pi_1(S)+\pi_n(S)+\set{0,1}^{n-1}$. Then $S'$ is the set of all $z = (z_1,z_2,\dots,z_{n-1}) \in T^{n-1}$ with $z_k=a_{k+1}+b_k+\epsilon_k$ for some $a = (a_1, a_2, \dots, a_{n}), b = (b_1, b_2, \dots, b_{n})\in S$ and $\epsilon_k\in \set{0,1}$ for all $k$. Since $B'$ is the union of boxes $\bigcup_{x\in S} C_x$, we have that $\pi_1(B')+\pi_n(B')$ is the union of boxes $\bigcup_{x\in S'} C_x$, though now with each $C_x\subseteq \TT^{n-1}$.  
    Thus, 
    $$|S'|/\lambda^{n-1}=\mu(\pi_1(B')+\pi_n(B')).$$
    On the other hand, $A+\lambda\cdot A$ consists of all points in $\TT$ of the form 
    $$\frac{b_1+a_2}{\lambda}+\frac{b_2+a_3}{\lambda^2}+\dots +\frac{b_{n-1}+a_n}{\lambda^{n-1}}+\frac{b_n}{\lambda^n}+\epsilon,$$ 
    where $a,b\in S$ and $\epsilon\in [0,\lambda^{-n}+\lambda^{-n+1})$. Here, we are viewing $a_i$ and $b_i$ as integers in $[0,\lambda-1]$, so $b_i+a_{i+1}$ could ``overflow''. Nevertheless, each element of $A+\lambda\cdot A$ is of the form 
    $$\frac{c_1}{\lambda}+\frac{c_2}{\lambda^2}+\cdots+\frac{c_{n-1}}{\lambda^{n-1}}+\delta,$$ 
    where $c_i=b_i+a_{i+1}\bmod{\lambda}$ or $b_i+a_{i+1}+ 1\bmod{\lambda}$ and $\delta\in [0,\lambda^{-n+1})$. Thus, $A+\lambda\cdot A\subseteq \bigcup_{x\in S'}I_x$, so we have 
    $$\mu(A+\lambda\cdot A)\leq |S'|/\lambda^{n-1}=\mu(\pi_1(B')+\pi_n(B')),$$ 
    as required.
\end{proof}

We believe that the two functions $\ex(\alpha)$ and $\ex_T(\alpha)$ should in fact be equal, but leave the task of proving that 
$\ex(\alpha) \ge \ex_T(\alpha)$ as an open problem.

We now give an upper bound for $\ex_T(\alpha)$, and therefore  $\ex(\alpha)$, by considering a suitable set $B\subseteq\TT^n$.

\begin{thm} \label{thm:uppersmall}
    For any positive integer $d$, $\ex_T(\alpha)\leq 2^{d-1}\alpha^{1-1/d}$ for all $\alpha \in (0,2^{-d})$. In particular, there is a constant $C> 0$ such that $\ex_T(\alpha) \leq e^{C \sqrt{\log (1/\alpha)}}\alpha$ for all $\alpha \in (0, \frac12)$.
\end{thm}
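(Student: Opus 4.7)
My plan is to take $B \subseteq \TT^d$ to be the product box $B = I^d$, where $I \subseteq \TT$ is a small open arc of length $\ell = \alpha^{1/d}+\eta$ for some tiny $\eta>0$. Then $\mu(B)=\ell^d>\alpha$, and both projections $\pi_1(B)$ and $\pi_d(B)$ coincide with $I^{d-1} \subseteq \TT^{d-1}$, so
\[\pi_1(B)+\pi_d(B)=(I+I)^{d-1}.\]
The hypothesis $\alpha<2^{-d}$ ensures $\ell<1/2$ (for $\eta$ sufficiently small), so $I+I$ is an arc of length exactly $2\ell$ in $\TT$, and $\mu(\pi_1(B)+\pi_d(B))=(2\ell)^{d-1}$. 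Letting $\eta\to 0$ gives $\ex_T(d,\alpha)\leq 2^{d-1}\alpha^{1-1/d}$, and the monotonicity $\ex_T(\alpha)\leq \ex_T(d,\alpha)$ noted after the definition of $\ex_T$ then proves the first bound.

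For the second claim, I will optimize over $d$. Rewriting the first bound as $\alpha \cdot 2^{d-1}\alpha^{-1/d}$ and taking logarithms, the overhead factor has log equal to $(d-1)\log 2 + \tfrac{1}{d}\log(1/\alpha)$. This is minimized near $d \approx \sqrt{\log(1/\alpha)/\log 2}$, where both terms have size $O(\sqrt{\log(1/\alpha)})$. Choosing $d$ to be the nearest integer to this optimum yields the desired bound $e^{C\sqrt{\log(1/\alpha)}}\alpha$ with $C$ slightly above $2\sqrt{\log 2}$, provided the constraint $\alpha<2^{-d}$ is satisfied. That constraint reduces to $\log(1/\alpha)>d\log 2 \approx \sqrt{\log 2 \cdot \log(1/\alpha)}$, i.e.\ $\log(1/\alpha) > \log 2$, which is exactly $\alpha<\tfrac12$; for $\alpha$ sufficiently close to $\tfrac12$ where the rounded $d$ may fail the hypothesis, $\log(1/\alpha)$ is bounded, so the trivial bound $\ex_T(\alpha)\leq 1$ already delivers $\ex_T(\alpha)\leq e^{C\sqrt{\log(1/\alpha)}}\alpha$ after enlarging $C$.

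The construction is elementary, and I do not foresee a substantive obstacle: taking $B$ to be a product box forces the two projections to coincide, after which the behaviour of $I+I$ on $\TT$ is transparent below the half-length threshold. The only care needed is the boundary-case bookkeeping for $\alpha$ near $\tfrac12$, but this is cosmetic and is absorbed into the constant $C$.
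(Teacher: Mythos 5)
Your proposal is correct and is essentially identical to the paper's proof: both take $B$ to be a $d$-dimensional cube of side length just above $\alpha^{1/d}$, observe that the two projections coincide and that their sumset is a cube of side $2\alpha^{1/d}$ (using $\alpha<2^{-d}$ to stay below the wrap-around threshold), and then optimize $d\approx\sqrt{\log(1/\alpha)}$. Your extra bookkeeping for rounding $d$ and for $\alpha$ near $\tfrac12$ is sound and merely spells out details the paper leaves implicit.
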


\begin{proof}
    If $B=(0,\gamma^{1/d})^d\subseteq \TT^d$, then $\mu(B)=\gamma$. Furthermore, $\pi_1(B)=\pi_d(B)=(0,\gamma^{1/d})^{d-1}\subseteq \TT^{d-1}$, so we have $\mu(\pi_1(B)+\pi_d(B))=(2\gamma^{1/d})^{d-1}=2^{d-1}\gamma^{1-1/d}$. Taking the infimum over all $\gamma > \alpha$ then gives the required upper bound $\ex_T(\alpha)\leq \ex_T(d, \alpha)\leq 2^{d-1}\alpha^{1-1/d}$. 
    To get a general bound independent of $d$, we simply optimize by setting $d=\sqrt{\log (1/\alpha)}$ and the bound follows.
\end{proof}

\begin{rmk}
    The constant term $2^{d-1}$ in Theorem~\ref{thm:uppersmall} is not optimal. For example, for $d=3$, instead of picking $B$ to be the $\gamma^{1/3}\times \gamma^{1/3}\times \gamma^{1/3}$ box, we could optimize the side lengths of the box by picking $B$ to be $(2\gamma)^{1/3}\times (\gamma/4)^{1/3}\times (2\gamma)^{1/3}$. This yields $\mu(\pi_1(B)+\pi_3(B))=\frac{9}{2^{4/3}}\gamma^{2/3}$, where we note that $\frac{9}{2^{4/3}}<2^2$. We made no attempt to optimize these constants for higher values of $d$, as any improvement would not change the form of the bound $e^{C \sqrt{\log (1/\alpha)}}\alpha$.
\end{rmk}

While Theorem~\ref{thm:uppersmall} proves the first upper bound in Theorem~\ref{thm:main}, the following result proves the second upper bound $\ex(\alpha)<1$.

\begin{thm} \label{thm:upperlarge}
    $\ex_T(\alpha)<1$ for all $\alpha \in (0, \frac12)$.
\end{thm}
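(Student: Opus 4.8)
The plan is to unwind the definition of $\ex_T$ and reduce to the following concrete task: for every $\alpha<\tfrac12$, produce an integer $n$ and an open set $B\subseteq\TT^n$ with $\mu(B)>\alpha$ and $\mu(\pi_1(B)+\pi_n(B))<1$; since $\ex_T(n,\alpha)$ is decreasing in $n$, a single such $B$ suffices. It is worth recording the obstruction we are pushing against: by Fubini $\mu(B)\le\min(\mu(\pi_1 B),\mu(\pi_n B))$, while Kneser's inequality on the connected group $\TT^{n-1}$ forces $\mu(\pi_1 B+\pi_n B)=1$ as soon as $\mu(\pi_1 B)+\mu(\pi_n B)\ge 1$. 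Hence any admissible $B$ has $\mu(B)\le\tfrac12$, and the whole content is to approach this barrier, which will require $n\to\infty$.

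First I would fix a continuous \emph{odd} score $\Phi\colon\TT^{n-1}\to\RR$, namely $\Phi(y_1,\dots,y_{n-1})=\sum_{i=1}^{n-1}\sin(2\pi y_i)$, together with a fixed margin $c>0$, and set $L=\{\Phi>0\}$ and $R=\{y:\Phi(-y)<-c\}$, both open. I then take
\[B=\{x\in\TT^n:(x_1,\dots,x_{n-1})\in L,\ (x_2,\dots,x_n)\in R\},\]
which is open. By construction $\pi_n(B)\subseteq L$ and $\pi_1(B)\subseteq R$, so $\pi_1(B)+\pi_n(B)\subseteq R+L$. The role of the margin is to make $R+L$ avoid a whole ball about $0$: if $r+\varepsilon=\ell$ hmm, if $r+\ell=\varepsilon$ with $\ell\in L$ and $r\in R$, then $\Phi(\ell)>0$ while $\Phi(\ell-\varepsilon)=\Phi(-r)<-c$, so $|\Phi(\ell)-\Phi(\ell-\varepsilon)|>c$; as $\Phi$ is $2\pi\sqrt{n-1}$-Lipschitz this is impossible once $\|\varepsilon\|\le c/(2\pi\sqrt{n-1})$. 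Thus $R+L$ misses a ball of positive measure and $\mu(\pi_1 B+\pi_n B)\le\mu(R+L)<1$ for every $n$.

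It then remains to show $\mu(B)\to\tfrac12$. Writing $\Phi_1=\Phi(x_1,\dots,x_{n-1})$ and $\Phi_2=\Phi(x_2,\dots,x_n)$, the definitions give $\mu(B)=\mu(\{\Phi_1>0,\ \Phi_2>c\})$. The structure of the sum of dilates enters precisely here: the two windows differ in a single coordinate, so $\Phi_1-\Phi_2=\sin(2\pi x_1)-\sin(2\pi x_n)$ obeys $|\Phi_1-\Phi_2|\le 2$. Consequently $\{\Phi_1\le 0,\ \Phi_2>c\}\subseteq\{c<\Phi_2\le 2\}\subseteq\{|\Phi_2|\le 2\}$, whence
\[\mu(B)\ \ge\ \mu(\{\Phi_2>c\})-\mu(\{|\Phi_2|\le 2\}).\]
Since $\Phi_2$ is a sum of $n-1$ i.i.d.\ bounded mean-zero terms of variance $\tfrac12$, its standard deviation grows like $\sqrt{n}$, so the central limit theorem gives $\mu(\{\Phi_2>c\})\to\tfrac12$ and anti-concentration gives $\mu(\{|\Phi_2|\le 2\})\to 0$. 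Hence $\mu(B)\to\tfrac12$, and taking $n$ large makes $\mu(B)>\alpha$, which finishes the proof.

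The step I expect to be the main obstacle is exactly the one the margin $c$ circumvents: ensuring the sumset misses a set of \emph{positive} measure rather than only the single point $0$. With $c=0$ (i.e.\ $R=\{\Phi(-\,\cdot\,)<0\}$) one still gets $0\notin R+L$, but $R+L$ could be of full measure, so the clean Lipschitz separation is what upgrades ``misses a point'' to ``misses a ball''. A secondary point is that if one wants a quantitative rate for $\ex_T(\alpha)\to 1$ as $\alpha\to\tfrac12$, the bound $\mu(\{|\Phi_2|\le 2\})\to 0$ should be made effective (e.g.\ via a local central limit or Berry--Esseen estimate); for the purely qualitative statement of the theorem, weak convergence is enough.
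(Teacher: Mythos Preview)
Your argument is correct and follows the same conceptual template as the paper: define $B\subseteq\TT^n$ by thresholding a sum of per-coordinate scores, use the central limit theorem to push $\mu(B)$ arbitrarily close to $\tfrac12$, and exploit the near-additivity of the score to keep $\pi_1(B)+\pi_n(B)$ away from full measure. The difference lies in the score function. The paper takes the linear score $x\mapsto\sum_i x_i$ with $x_i\in[0,1)$ and sets $B=\{x:\sum_i x_i<\tfrac n2-1\}$; since real sums add and reduction mod $1$ can only decrease coordinate sums, one gets directly $\pi_1(B)+\pi_n(B)\subseteq\{y:\sum_i y_i<n-2\}\subsetneq\TT^{n-1}$, with no margin parameter or Lipschitz step needed. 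Your sine score $\Phi(y)=\sum_i\sin(2\pi y_i)$ is a genuine continuous function on $\TT^{n-1}$, which is conceptually pleasant, but the price is that $\Phi$ is not additive, so you must introduce the margin $c$ and the Lipschitz separation to exclude a ball rather than a single point, and you also need the extra anti-concentration estimate $\mu(\{|\Phi_2|\le 2\})\to 0$. Both routes are short; the paper's is a line or two shorter because the linear score makes the sumset containment immediate.
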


\begin{proof}
    Let $n$ be sufficiently large and set $B=\{x\in \TT^n : x_i>0 \mbox{ for all } i, \; \sum_{i=1}^n x_i < \frac{n}{2}-1\}$, where $x_i$ is considered an element of $[0,1)$ for all $i$. As $n\to\infty$, $\mu(B)\to \frac12$, since, if $x\in\TT^n$ is picked uniformly randomly, $\sum x_i$ is approximately normal with mean $\frac{n}{2}$ and variance $\Theta(n)$. Thus, for sufficiently large $n$, $\alpha<\mu(B)<\frac12$. Fix such an $n$. 
    Now both $\pi_1(B)$ and $\pi_n(B)$ are contained in the set $C=\{x\in\TT^{n-1} : \sum_{i=1}^{n-1} x_i < \frac{n}{2}-1\}$, so 
    $$\pi_1(B)+\pi_n(B)\subseteq C+C=\{x\in\TT^{n-1}:\sum_{i=1}^{n-1} x_i < n-2\}\subsetneq \TT^{n-1}.$$ 
    Hence, $\mu(\pi_1(B)+\pi_n(B))<1$, so that $\ex_T(\alpha) \le \ex_T(n, \alpha) <1$.
\end{proof}

\section{The lower bound} \label{sec:lower}

We now prove the lower bound in Theorem~\ref{thm:main}, which we restate as follows. As prefaced in the previous section, the proof of this result makes use of ideas similar to those used in~\cite{S08} for studying sums of transcendental dilates.

\begin{thm} \label{thm:lower}
There are constants $C',c >0$ such that $\ex(\alpha)\geq e^{C'\log^c (1/\alpha)}\alpha$ for all $\alpha\in (0,1/2)$. In particular, one may take $c= \frac 17$.
\end{thm}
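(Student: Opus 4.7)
The plan is a proof by contradiction: assume $A \subseteq \ZZ/p\ZZ$ with $|A| = \alpha p$ satisfies $|A + \lambda A| \le K\alpha p$ where $K < e^{C'\log^c(1/\alpha)}$, and derive a contradiction by exploiting that we may take $\lambda$ arbitrarily large (since the quantity $\ex(\alpha)$ is a $\limsup$ in $\lambda$). This follows the strategy used for sums of transcendental dilates: small doubling of $A$ with $\lambda A$ forces $A$ into low-dimensional additive structure, while largeness of $\lambda$ plays the role of transcendence, forcing $\lambda A$ to occupy an independent direction of that structure, and hence forcing the sumset to be large.

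First, Pl\"unnecke--Ruzsa applied to the pair $(A, \lambda A)$ propagates the small doubling: setting $Y = 2A - 2A$, both $|Y|$ and the mixed sumset $|Y + \lambda Y|$ are at most $K^{O(1)}|A|$. Second, a quantitative Bogolyubov-type theorem (Sanders's quasi-polynomial version is the natural choice) yields a Bohr set $B = B(\Gamma, \rho) \subseteq Y$ of rank $d \le \log^{O(1)}(1/\alpha)$ and density $\beta \ge \exp(-\log^{O(1)}(1/\alpha))$, from which $|B + \lambda B| \le K^{O(1)}\alpha p$.

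The heart of the proof is to lower-bound $|B + \lambda B|$ for suitably large $\lambda$. View $B$ as (approximately) the preimage of a small box $[-\rho,\rho]^d \subseteq \TT^d$ under the Fourier embedding $\phi_\Gamma: x \mapsto (x\gamma_i/p)_i$, and $\lambda B$ as the preimage under $\phi_{\lambda^{-1}\Gamma}$. For $\lambda$ chosen along a subsequence tending to infinity --- exploiting the $\limsup_\lambda$ --- one can arrange that the frequencies $\Gamma$ and $\lambda^{-1}\Gamma$ are Kronecker-independent, so that the combined embedding into $\TT^{2d}$ is equidistributed and $B + \lambda B$ pulls back from a Minkowski sum of two $d$-dimensional boxes placed orthogonally in $\TT^{2d}$. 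An elementary volume computation, in the same spirit as the construction behind Theorem~\ref{thm:uppersmall}, then yields a bound of the form $|B + \lambda B|/p \gtrsim 2^{\Omega(d)}\beta^{1-1/d}$.

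Comparing this lower bound with the upper bound $|B+\lambda B| \le K^{O(1)}\alpha p$ and substituting the Bogolyubov estimates for $d$ and $\beta$ yields $K \ge e^{C'\log^c(1/\alpha)}$, with the specific value $c = 1/7$ arising from balancing Sanders's rank exponent against the gain from the Fourier step. The main obstacle is this third step: making the dimension-doubling argument robust enough to handle honest Bohr sets rather than idealized boxes, and handling the fact that the frequency set $\Gamma$ is determined by $A$ (and so only indirectly controlled by our choice of $\lambda$). The quality of this Fourier-analytic input, together with the rank exponent in Sanders, is ultimately what constrains the final exponent $c$.
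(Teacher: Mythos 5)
Your first two steps match the paper: Pl\"unnecke--Ruzsa and Ruzsa's triangle inequality propagate the small doubling to $Y = 2A-2A$, and Sanders' Bogolyubov--Ruzsa lemma places a structured set of rank $d \le C\log^6 K$ inside $Y$ (the paper uses a proper generalized arithmetic progression rather than a Bohr set, but that is largely cosmetic). The gap is in your third step, which you yourself flag as the main obstacle but do not resolve, and the resolution you sketch cannot work. You cannot ``choose $\lambda$ along a subsequence'' to make $\Gamma$ and $\lambda^{-1}\Gamma$ Kronecker-independent: in the definition of $\ex(\lambda,\alpha)$ the extremal set $A$ --- and hence the frequency set $\Gamma$ --- is chosen \emph{after} $\lambda$, and the construction behind Theorem~\ref{thm:uppersmall} shows that the adversary will correlate them. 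Indeed, if genuine independence held, the two rank-$d$ structures would be in general position and you would get $|B + \lambda\cdot B| \gtrsim \min(|B|^2, p) = p$ for large $p$, forcing $K \ge (1/\alpha)^{\Omega(1)}$ --- which would contradict the paper's own upper bound $e^{C\sqrt{\log(1/\alpha)}}\alpha$, a sign that the independence premise is unattainable. Your stated conclusion $|B+\lambda\cdot B|/p \gtrsim 2^{\Omega(d)}\beta^{1-1/d}$ is also not what the ``orthogonal boxes'' picture yields: two orthogonally placed $d$-dimensional boxes of measure $\beta$ sum to a $2d$-dimensional box of measure $\beta^2$, not $2^{\Omega(d)}\beta^{1-1/d}$. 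The bound $\beta^{1-1/d}$ corresponds to the maximally \emph{dependent} configuration, and proving that this is the worst case for a single sum $B + \lambda\cdot B$ is essentially the open direction $\ex(\alpha)\ge\ex_T(\alpha)$ that the paper explicitly leaves unresolved.

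The idea you are missing is to iterate rather than to lower-bound the single sum. The paper considers $P' + \lambda\cdot P' + \lambda^2\cdot P' + \cdots + \lambda^d\cdot P'$, where $P'$ is the sub-progression spanned by the directions of the generalized arithmetic progression whose side lengths are at least $\lambda$. On one hand, the base-$\lambda$ digit expansion shows that this iterated sum contains $\lambda^d P'$, which by repeated Cauchy--Davenport is all of $\ZZ/p\ZZ$ once $\lambda$ and $p$ are large enough. On the other hand, a Pl\"unnecke-type lemma with dilates (Lemma~\ref{lem:pr}, proved by repeated use of Ruzsa's triangle inequality) bounds the iterated sum above by $K^{O(d)}|A|$. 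Comparing gives $K^{O(d)}\alpha \ge 1$, and substituting $d \le C\log^6 K$ yields $c = \tfrac17$. This argument is purely combinatorial and completely sidesteps equidistribution and independence considerations, which is essential precisely because the structure of $A$ can be adapted to $\lambda$.
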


In what follows, as well as the notation $\lambda \cdot B = \{\lambda b : b \in B\}$ for dilates, we will use $mB$ to denote the $m$-fold sumset
$$mB = \underbrace{B + B + \dots + B}_\text{$m$ times}.$$ 
Before proving Theorem~\ref{thm:lower}, we require the following result, a variant of the Pl\"unnecke--Ruzsa inequality~\cite{R89}, which states that if $A, B$ are finite subsets of an abelian group with $|A + B| \leq K |A|$, then $|mB - nB| \leq K^{m+n}|A|$.  
In particular, if $|B+B| \leq K |B|$, then $|mB| \leq K^m |B|$. 
Our result is a version of this latter inequality allowing for dilates of each term.

\begin{lem} \label{lem:pr}
Let $B$ be a finite subset of an abelian group, $\lambda$ an integer and $K>0$ such that $|B+\lambda\cdot B|\leq K|B|$. Then, for any positive integer $l$, 
\[|B+\lambda\cdot B+\lambda^2\cdot B+\cdots +\lambda^l\cdot B|\leq K^{7l-6}|B|.\]
\end{lem}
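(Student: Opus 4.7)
The plan is to induct on $l$, establishing a multiplicative recurrence of the form $|S_l| \le K^7\,|S_{l-1}|$ where $S_l := B + \lambda \cdot B + \cdots + \lambda^l \cdot B$; iterating from the base case $|S_1| = |B + \lambda \cdot B| \le K|B|$ then yields the claimed $|S_l| \le K^{7l-6}|B|$. The two ingredients I would use are (i) the Pl\"unnecke--Ruzsa inequality cited just before the lemma, applied with $A = B$ and with the role of ``$B$'' taken by $\lambda \cdot B$; since $\lambda \cdot$ is a bijection, this gives $|mB - nB| = |m\lambda \cdot B - n\lambda \cdot B| \le K^{m+n}|B|$ for all $m, n \ge 0$ (in particular $|2B| \le K^2|B|$, and another Pl\"unnecke application from this doubling bound yields $|3B| \le K^6|B|$), and (ii) the sum--triangle inequality $|X + Z|\cdot |Y| \le |X+Y|\cdot|Y+Z|$ for arbitrary finite sets $X, Y, Z$ in an abelian group, which is a standard consequence of the general form of Pl\"unnecke--Ruzsa.

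For the inductive step, I would write $S_l = S_{l-1} + \lambda^l \cdot B$ and apply (ii) with the bridge $Y = \lambda^{l-1} \cdot B$. Because $|\lambda^{l-1}\cdot B + \lambda^l \cdot B| = |B + \lambda \cdot B| \le K|B| = K|Y|$, this gives $|S_l| \le K \cdot |S_{l-1} + \lambda^{l-1}\cdot B|$. The key structural observation is that $\lambda^{l-1}\cdot B$ is already the last summand of $S_{l-1}$, so
\[S_{l-1} + \lambda^{l-1}\cdot B = S_{l-2} + 2\lambda^{l-1}\cdot B.\]
Applying (ii) a second time with the same bridge $Y = \lambda^{l-1}\cdot B$, this time to $X = S_{l-2}$ and $Z = 2\lambda^{l-1}\cdot B$, the new factor is $|\lambda^{l-1}\cdot B + 2\lambda^{l-1}\cdot B|/|\lambda^{l-1}\cdot B| = |3B|/|B| \le K^6$, so $|S_{l-1} + \lambda^{l-1}\cdot B| \le K^6 |S_{l-2} + \lambda^{l-1}\cdot B| = K^6|S_{l-1}|$. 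Combining the two applications gives $|S_l| \le K^7|S_{l-1}|$, as required.

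The main obstacle I would expect is spotting this telescoping device: the common choice $Y = \lambda^{l-1}\cdot B$ as the bridge in both sum--triangle applications, exploiting the fact that this set appears as the last summand of $S_{l-1}$ and also pairs cleanly with $\lambda^l \cdot B$ via the hypothesis. Without it one is forced to estimate long--range pairwise sums $|\lambda^i\cdot B + \lambda^j\cdot B|$ for $|i-j|$ large, which would need a separate auxiliary induction. Everything else is just Pl\"unnecke--Ruzsa bookkeeping; I note in passing that using the sharper bound $|3B| \le K^3|B|$, obtained directly from the stated Pl\"unnecke--Ruzsa inequality rather than via the doubling of $B$, would improve the recurrence to $|S_l| \le K^4|S_{l-1}|$ and strengthen the final bound to $|S_l| \le K^{4l-3}|B|$.
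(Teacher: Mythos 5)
Your proof is correct and follows essentially the same route as the paper: an induction establishing $|S_l|\leq K^7|S_{l-1}|$ for the partial sums $S_l=B+\lambda\cdot B+\cdots+\lambda^l\cdot B$, driven by Ruzsa's sum--triangle inequality with a dilate of $B$ as the bridge and by the Pl\"unnecke--Ruzsa bound $|3B|\leq K^6|B|$. The only cosmetic difference is that you apply the triangle inequality twice per inductive step with bridge $\lambda^{l-1}\cdot B$, whereas the paper pre-computes $|B+B+\lambda\cdot B|\leq K^7|B|$ and then uses a single triangle inequality per step with bridge $\lambda^l\cdot B$; your closing observation that the sharper $|3B|\leq K^3|B|$ would yield $K^{4l-3}$ is also valid.
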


\begin{proof}
    The sum version of Ruzsa's triangle inequality~\cite{R96} states that, for any finite subsets $X,Y,Z$ of an abelian group,
    \[|X||Y+Z|\leq |X+Y||X+Z|.\]
    Taking $X=\lambda\cdot B$, $Y=Z=B$ and noting that $|\lambda\cdot B|=|B|$, we have $|B+B|\leq K^2|B|$. Hence, by the Pl\"unnecke--Ruzsa inequality, 
    $|B+B+B|\leq K^6|B|$. Thus, another application of Ruzsa's triangle inequality (with $X=B$, $Y=B+B$, $Z=\lambda\cdot B$) yields
    \[|B+B+\lambda\cdot B|\leq |B+B+B||B+\lambda\cdot B|/|B|\leq K^7|B|.\]

    We prove the lemma by induction on $l$, noting that the case $l=1$ follows from the given assumption. Suppose now that we have
    \[|B+\lambda\cdot B+\lambda^2\cdot B+\cdots +\lambda^l\cdot B|\leq K^{7l-6}|B|\]
    for some $l$ and we wish to prove it for $l+1$. Yet another application of Ruzsa's triangle inequality (with $X=\lambda^l\cdot B$, $Y=B+\lambda\cdot B+\cdots +\lambda^{l-1}\cdot B$, $Z=\lambda^l\cdot B+\lambda^{l+1}\cdot B$) yields
    \begin{align*}
        |B+\lambda\cdot B+\cdots +\lambda^{l+1}\cdot B| &\leq |B+\lambda\cdot B+\cdots +\lambda^l\cdot B||\lambda^l\cdot B+\lambda^l\cdot B+\lambda^{l+1}\cdot B|/|B|\\
        &\leq K^{7l-6}|\lambda^l\cdot B+\lambda^l\cdot B+\lambda^{l+1}\cdot B|\\
        &= K^{7l-6}|B+B+\lambda\cdot B|\\
        &\leq K^{7(l+1)-6}|B|,
    \end{align*}
    as required.
\end{proof}

The other thing that we need for the proof of Theorem~\ref{thm:lower} is Sanders' quantitative version of the Bogolyubov--Ruzsa lemma~\cite[Theorem 1.1]{S12}, which states that if $A$ is a finite subset of an abelian group with $|A+A|\le K|A|$, then $2A - 2A$ contains a proper generalized arithmetic progression $P$ of dimension $d\leq d_0(K) \leq C \log^6 K$ and size at least $C_1(K)|A|$, where $C$ is an absolute constant. Here a generalized arithmetic progression $P$ of dimension $d$ is a set of the form 
$$P = \{a + \sum_{i=1}^d n_i v_i : 0 \le n_i \le k_i - 1 \mbox{ for all } i\}$$
and such a generalized arithmetic progression is proper if all of its elements are distinct, that is, if $|P| = k_1 k_2 \cdots k_d$. 

\begin{proof}[Proof of Theorem~\ref{thm:lower}] 
    Fix $\alpha\in (0,1/2)$ and let $K=2\ex(\alpha)/\alpha$. 
    Let $\lambda$ be sufficiently large and $p$ be sufficiently large in terms of $\lambda$. Let $A\subseteq \ZZ/p\ZZ$, which we may assume has size $|A|=\alpha p$, 
    be such that $|A+\lambda\cdot A|\leq 2\ex(\alpha) p = K|A|$. By Ruzsa's triangle inequality, we again have $|A+A|\leq K^2|A|$. Hence, by Sanders' quantitative version of the Bogolyubov--Ruzsa lemma, $2A-2A$ contains a proper generalized arithmetic progression $P$ of dimension $d\leq d_0(K) \leq C \log^6 K$ and size at least $C_1(K)\alpha p$, where $C$ is an absolute constant. By the Pl\"unnecke--Ruzsa inequality, we have $|2A-2A+2A-2A|\leq K^{16}|A|$. By Ruzsa's triangle inequality (with $X=A$, $Y=2A-2A+A-2A$, $Z=\lambda\cdot A$), we have
    $$|2A-2A+A-2A+\lambda\cdot A|\leq |2A-2A+2A-2A||A+\lambda\cdot A|/|A|\leq K^{17}|A|.$$
    Repeating three more times, each time replacing an appropriate $A$ term with $\lambda\cdot A$, we get
    $$|(2A-2A)+\lambda\cdot (2A-2A)|\leq K^{20}|A|.$$
    By Lemma~\ref{lem:pr} applied to $2A-2A$, we then have that,
    \[|(2A-2A)+\lambda\cdot (2A-2A)+\cdots +\lambda^d\cdot (2A-2A)|\leq K^{140d}|A|.\]

    Suppose $P=v_0+P_0$ for some $v_0\in \ZZ/p\ZZ$ and $P_0$ a proper Minkowski sum of $d$ arithmetic progressions $\set{0,v_i,2v_i,\ldots,(k_i-1)v_i}$, $i=1,\ldots,d$, with $|P|=|P_0|=k_1k_2\cdots k_d$ and $k_1\geq k_2\geq\cdots \geq k_d$. Let $m\leq d$ be the largest integer with $k_m\geq \lambda$. Since $|P_0|\geq \lambda^d$ for sufficiently large $p$, we have $m\geq 1$. Let $P'=\sum_{i=1}^m \set{0,v_i,2v_i,\ldots,(k_i-1)v_i}$. Then this is a proper sum with $|P'|\geq |P_0|/\lambda^{d-m}$. Since $k_1\geq \cdots\geq k_m\geq \lambda$, we have that, 
    \begin{align*}
        P'+\lambda\cdot P'+\lambda^2\cdot P'+\dots+\lambda^d\cdot P' \supseteq \sum_{i=1}^m \set{0,v_i,2v_i,\ldots,\lambda^d (k_i-1)v_i} 
        =\lambda^d P'.
    \end{align*}
    By repeated application of the Cauchy--Davenport theorem, we have that 
    $$|\lambda^d P'|\geq \min(\lambda^d |P'|-\lambda^d+1,p)\geq \min(\lambda^{m}C_1 \alpha p-\lambda^d+1,p)=p$$ 
    for $\lambda$ large enough that $\lambda C_1\alpha\geq 2$ and $p$ sufficiently large. Thus, $P'+\lambda\cdot P'+\lambda^2\cdot P'+\dots+\lambda^d\cdot P' =\ZZ/p\ZZ$. On the other hand,
    \begin{align*}
        & |P'+\lambda\cdot P'+\lambda^2\cdot P'+\dots+\lambda^d\cdot P'|\\
        &\leq |P+\lambda\cdot P+\lambda^2\cdot P+\dots+\lambda^d\cdot P|\\
        &\leq |(2A-2A)+\lambda\cdot (2A-2A)+\lambda^2\cdot (2A-2A)+\dots +\lambda^d\cdot (2A-2A)|\\
        &\leq K^{140d}|A| \leq K^{140 d_0}|A|.
    \end{align*}
    This implies that $K^{140 d_0}\alpha\geq 1$. From $d_0\leq C\log^6 K$, we obtain $e^{140 C \log^{7} K}\alpha\geq 1$, which implies that 
    $$\ex(\alpha)= K\alpha/2 \geq e^{C'(\log\frac{1}{\alpha})^c}\alpha$$ 
    for some absolute constants $c$ and $C'$, where one may take $c = \frac 17$.
\end{proof}

If one could show that the Bogolyubov--Ruzsa lemma holds with $d_0(K) \leq C \log K$, which would be best possible, then we could take $c = \frac12$, matching our upper bound. 

\medskip

To close, let us mention a variant of the problem we have studied, namely, that of estimating the minimum size of $|A + \dots + A + \lambda \cdot A|$ over all $A \subseteq \ZZ/p\ZZ$ of given size. If there are $k$ summands, we can again study the asymptotic behaviour of this minimum by considering
\[\ex(k, \lambda, \alpha) = \limsup_{p \rightarrow \infty} \min\left\{| \underbrace{A + \dots + A}_{k-1\text{ times}} + \lambda \cdot A|/p : A\subseteq\ZZ/p\ZZ, \, |A|\geq \alpha p\right\}.\]
As a possible extension of his result that $\ex(\lambda, \alpha) < 1$ for $\alpha < \frac12$, Fiz Pontiveros~\cite[Conjecture 1.3]{F13} conjectured that $\ex(k, \lambda, \alpha) < 1$ for $\alpha < \frac1k$. However, this is easily seen to be false. 
Indeed, a simple consequence of the proof of Theorem~\ref{thm:lower} is that, provided $k$ is sufficiently large, $|A + \lambda \cdot A| \geq 10|A|$ for all $A \subseteq \ZZ/p\ZZ$ with $|A| = \lceil p/(k+1) \rceil$ and all $\lambda$ sufficiently large in terms of $k$. But then, by repeated application of the Cauchy--Davenport inequality, $|A + \dots + A + \lambda \cdot A| \geq \min\{(k+8)|A| - (k-2), p\} = p$. In particular, $\ex(k, \lambda, \alpha) = 1$ for $\alpha \geq 1/(k+1)$ and $\lambda$ sufficiently large in terms of $k$. This bound on the minimum $\alpha$ such that $\ex(k, \lambda, \alpha) = 1$ for $\lambda$ sufficiently large in terms of $k$ can certainly be improved, though we have made no serious attempt to do so here. Instead, we leave it as an open problem to give more precise estimates on how this threshold changes with $k$.

\end{document}